\def\section{\@startsection{section}{1}%
  \z@{1.1\linespacing\@plus\linespacing}{.8\linespacing}%
  {\normalfont\Large\scshape\centering}}
\theoremstyle{plain}
\newtheorem*{thmA}{Theorem A}
\newtheorem*{conj*}{Root Groups Conjecture}
\newtheorem*{thm1.2}{(1.2) Theorem}
\newtheorem*{thm1.3}{(1.3) Theorem}
\newtheorem*{thm1.4}{(1.4) Theorem}
\newtheorem*{prop*}{Proposition}
\newtheorem*{thm*}{Theorem}
\newtheorem{prop}{Proposition}[section]
\newtheorem{lemma}[prop]{Lemma}
\theoremstyle{definition}
\newtheorem*{Def*}{Definition}
\newtheorem{Defs}[prop]{Definitions}
\newtheorem*{notation*}{Notation}
\newtheorem{remark}[prop]{Remark}
\newcommand{\ff}{F}
\newcommand{\ga}{\alpha}
\newcommand{\gb}{\beta}
\newcommand{\gc}{\gamma}
\newcommand{\gs}{\sigma}
\newcommand{\charc}{{\rm char}}
\newcommand{\sminus}{\smallsetminus}
\newcommand{\one}{{\bf 1}}
\numberwithin{equation}{section}
\begin{document}
\title[Uniform characterization of the octonions and the quaternions]{A uniform characterization of the octonions and the
quaternions using commutators}
\author[Kleinfeld and Segev]{Erwin Kleinfeld\qquad Yoav Segev}

\address{Erwin Kleinfeld \\
1555 N.~Sierra St.~Apt 120, Reno, NV 89503-1719, USA}
\email{erwinkleinfeld@gmail.com}

\address{Yoav Segev \\
         Department of Mathematics \\
        Ben-Gurion University \\
        Beer-Sheva 84105 \\
         Israel}
\email{yoavs@math.bgu.ac.il}

\keywords{quaternion algebra, octonion algebra, division algebra, zero divisor, commutator.}
 
\dedicatory{In honor  of professor Amitsur, for his centennial symposium 2021}

\begin{abstract}
Let $R$ be a ring with $\one,$ which is not commutative.
Assume that a non-zero commutator in $R$ is not a zero divisor.
Assume further that either $R$ is alternative, but not associative,
or $R$ is associative and any commutator $v\in R$ satisfies: $v^2$
is in the center of $R.$ 

We prove that $R$ has no zero divisors.
Furthermore, if $\text{char}(R)\ne 2,$ then the localization of $R$
at its center is an  octonion division algebra, if $R$ is alternative
and a quaternion division algebra, if $R$ is associative.

Our proof in both cases is essentially the same and it is elementary and rather self contained.
\end{abstract}
\date{\today}
 
\maketitle


\section{Introduction}
The new results in this paper are first, that an alternative ring with $\one$ which is not commutative
and not associative,
and which satisfies hypothesis (i) of Theorem A has no zero divisors.
Second, we give a new proof of an old theorem of Bruck and Kleinfeld \cite[Theorem A]{BK},
that if $R$ is an alternative ring without zero divisors and of characteristic not $2,$
then the localization of $R$ at its center
is an octonion divison algebra.  In \cite{BK} associators are used for the proof,
and in this paper we use commutators.

This makes it possible to prove Theorem A, in which the quaternions and the octonions
are dealt with in a uniform way (see Remark \ref{MR} below), with a similar characterization.
The results in Theorem A about the quaternions appear in \cite{KS2}.

\begin{thmA}
Let $R$ be a ring with $\one$ which is not commutative.
Assume that
\begin{itemize}
\item[(i)]
A non-zero  commutator in $R$  is not a divisor of zero in $R,$ and

\item[(ii)]
one of the following holds.
\begin{itemize}
\item[(a)]
$R$ is an alternative ring which is not associative.

\item[(b)]
$R$ is an associative ring such that $(x,y)^2\in C,$ for all $x,y\in R,$ where $C$ is the center of $R.$
\end{itemize}
\end{itemize}
Then
\begin{enumerate}
\item
$R$ contains no divisors of zero.

\item
Suppose, in addition, that the characteristic of $R$ is not $2,$
and let $R//C$ be the localization of $R$ at $C.$ 
If $R$ is alternative, then $R//C$ is an octonion division algebra,
and if $R$ is associative, then $R//C$ is
a quaternion division algebra.
\end{enumerate}
\end{thmA}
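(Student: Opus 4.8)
The plan is to handle both cases (a) and (b) simultaneously as far as possible, exploiting the hypothesis that a non-zero commutator is not a zero divisor. Write $(x,y) = xy - yx$ for commutators and $C$ for the center. The first goal is part (1): $R$ has no zero divisors. Since $R$ is not commutative, fix a commutator $c = (a,b) \neq 0$; by hypothesis $c$ is not a zero divisor. The strategy is to show that \emph{every} non-zero element of $R$ is not a zero divisor. I would start from the Moufang/alternative identities (in case (a)) or plain associativity together with the hypothesis $(x,y)^2 \in C$ (in case (b)), and derive that for any $x \in R$, the commutator $(x,\cdot)$ interacts with $c$ in a controlled way. The key intermediate claim I expect is: if $z \in R$ satisfies $zu = 0$ for some $u \neq 0$, then one can manufacture from $z$, $u$, and the fixed non-zero-divisor commutator $c$ a \emph{non-zero commutator} that \emph{is} a zero divisor, contradicting (i). Producing that commutator is the crux — it is where the alternative laws (linearized, e.g.\ $(x,x,y)=0$ and the flexible law, giving control of $(x, xy)$ etc.) or the associative identity $v^2 \in C$ must be used to move the annihilating element $u$ into commutator position.

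Once (1) is established, $R$ is a (possibly non-associative, alternative) division-like ring in the weak sense that it has no zero divisors; in case (a) $R$ is an alternative ring without zero divisors, so I can invoke the Bruck--Kleinfeld structure theory — indeed the paper advertises a new commutator-based proof of \cite[Theorem A]{BK} — to conclude that the localization $R/\!/C$ at the center is an octonion division algebra once $\charc R \neq 2$. For this localization step I would first check that $C$ is an integral domain (central elements that are zero divisors would have to vanish since $R$ has no zero divisors, and $C \neq 0$ as it contains $\one$), so the Ore-type localization $R/\!/C$ makes sense; then show the localized ring is finite-dimensional over its field of fractions by producing a quadratic (degree-two) equation over $C$ satisfied by every element — this is where $\charc R \neq 2$ enters, allowing the symmetrization $x + \bar x$ and norm $x\bar x$ to be defined via $t(x) = x + \bar x$ with $\bar x = t(x)\one - x$. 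A degree argument (Artin's theorem: the subring generated by any two elements of an alternative ring is associative) then forces the dimension to be $1$, $2$, $4$, or $8$; non-commutativity rules out $1,2$, and in case (b) associativity rules out $8$, leaving the quaternion algebra, while in case (a) non-associativity rules out $4$, leaving the octonion algebra.

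The main obstacle, I expect, is the first part — extracting a contradiction from the existence of a zero divisor using \emph{only} the single hypothesis that non-zero commutators are not zero divisors. The difficulty is that the hypothesis speaks only about commutators, so all the work is in showing that an arbitrary zero-divisor relation $zu = 0$ can be "converted" into a commutator zero-divisor relation; this conversion is delicate because multiplying or bracketing with elements can collapse things to zero, and in the alternative (non-associative) case one cannot freely reassociate. I anticipate the proof proceeds by a sequence of small lemmas: first that $C$ behaves well, then that certain specific commutators (like $(x, c)$ or $(x, cx)$, for $c$ the fixed non-zero commutator and $x$ a putative zero divisor or its annihilator) are non-zero unless forced otherwise, and finally a clean case division. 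A secondary, more technical obstacle is verifying that the two cases (a) and (b) really can be pushed through with "essentially the same" argument, as the abstract claims — this presumably requires isolating exactly the consequence of the alternative laws that the associative hypothesis $(x,y)^2 \in C$ also delivers, and phrasing every step in terms of that common consequence. Once past (1), part (2) is comparatively standard division-algebra structure theory, modulo the careful setup of the central localization.
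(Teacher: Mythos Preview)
Your plan has the right ingredients but in the wrong places, and for part (2) it diverges substantially from what the paper actually does.

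\textbf{Part (1).} The ``common consequence'' you are looking for is simply $(x,y)^2\in C$: this is the hypothesis in case (b), and it also holds in case (a) (Proposition~\ref{facts}(3)(ii)). Once you have it, the argument for no zero divisors is short and is \emph{not} the ``manufacture a commutator that annihilates'' idea you sketch. Rather, the quadratic equation you mention only in part (2) is already the key to part (1). For $x\notin C$ take any $v=(x,y)\ne 0$; then $v$, $vx=(x,yx)$, and $v+vx=(x,y(\one+x))$ are all non-zero commutators, so their squares lie in $C$. Expanding $(v+vx)^2$ in the (associative) subring generated by $x,y$ yields $ax^2+bx+c=0$ with $a=v^2$, $c=(vx)^2$ non-zero in $C$. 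Now if $xr=0$ with $r\ne 0$, multiply to get $cr=0$; central elements are handled by your idea ($vc=(x,yc)$ is a non-zero commutator), giving the contradiction. Your plan never locates this quadratic-with-nonzero-constant-term step inside part (1), and that is the actual crux.

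\textbf{Part (2).} The paper does \emph{not} invoke Bruck--Kleinfeld, nor a Hurwitz-type dimension classification $1,2,4,8$. After localizing so that $C$ is a field, it proceeds constructively: the quadratic equation above shows every $x\notin C$ can be shifted to $p=x-b/2$ with $p^2\in C$; a short computation then shows that given pairwise anticommuting $u_1,\dots,u_n$ with $u_i^2\in C$, any $p\notin V=C+\sum Cu_i$ can be corrected (subtracting a $V$-combination) to an $m\notin V$ with $m^2\in C$ and $mu_i=-u_im$ for all $i$ (Proposition~\ref{anti}). Iterating this builds a quaternion subalgebra $Q$, and in case (a) an octonion subalgebra $O$. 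Equality $R=Q$ (resp.\ $R=O$) is then forced by contradiction: if $m\notin Q$ anticommutes with $a,b,ab$, associativity gives $m(ab)=-m(ab)$; if $m\notin O$ anticommutes with all seven imaginary units, a short alternative-law calculation (Lemma~\ref{oct}) gives $2m(a(bc))=0$. Your dimension-count route may be viable, but it is a genuinely different argument, requires setting up a norm form and appealing to composition-algebra theory, and would not deliver the advertised ``new, commutator-based'' proof of the Bruck--Kleinfeld theorem that the paper supplies.
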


We note that if $x,y\in R$ are non-zero elements such that $xy=0,$
then we say that both $x$ and $y$ are zero divisors in $R.$ 
Recall also that the commutator $(x,y)=xy-yx.$

\begin{remark}\label{MR}
Notice that by Lemma \ref{facts}(3(ii)) below,
if $R$ satisfies hypotheses (i) and (ii(a))
of Theorem A,
then $(x,y)^2\in C,$ for all $x,y\in R,$ where $C$ is the center of $R.$
\end{remark}

\section{Preliminaries on alternative rings}

Our main references for alternative rings are \cite{K1, K2}.
Let $R$ be a ring, not necessarily with $\one$ and not
necessarily associative.  

\begin{Defs}
Let $x,y,z\in R.$
\begin{enumerate}
\item
The {\bf associator} $(x,y,z)$ is defined to be
\[
(x,y,z)=(xy)z-x(yz).
\]

\item
The {\bf commutator} $(x,y)$ is defined to be
\[
(x,y)=xy-yx.
\]

\item
$R$ is an {\bf alternative ring} if
\[
(x,y,y)=0=(y,y,x),
\]
for all $x,y\in R.$  It is well known and is a theorem of E.~Artin,
that $R$ is an alternative ring if and only if any subring of $R$ generated by two elements is associative.
This fact will be used throughout this paper.

\item
The {\bf nucleus} of $R$ is denoted $N$ and defined
\[
N=\{n\in R\mid (n,R,R)=0\}.
\]
Note that in an alternative ring the associator is skew symmetric in its $3$
variables (\cite[Lemma 1]{K2}).  Hence   $(R,n,R)=(R,R,n)=0,$ for $n\in N.$

\item
The {\bf center} of $R$ is denoted $C$ and defined
\[
C=\{c\in N\mid (c,R)=0\}.
\]
\end{enumerate}
\end{Defs}

In the remainder of this section {\bf $R$ is an alternative ring}
which is {\bf not associative}. 
$N$ denotes the nucleus of $R$ and $C$ its center.

\begin{prop}\label{facts}
Let $R$ be an alternative ring which is not associative,  and let $v\in R$ be a commutator, then

\begin{enumerate}
\item
$v^4\in N.$ 

\item
$[v^2,R,R]v=0.$

\item
\begin{itemize}
\item[(i)]
If $v$ is not a zero divisor in $R,$ then $v^2\in N.$

\item[(ii)]
If $(x,y)$ is not a zero divisor, for all $x,y\in R,$ then
$N=C,$ so $(x,y)^2\in C,$ for all $x,y\in R.$ 
\end{itemize}
\end{enumerate}
\end{prop}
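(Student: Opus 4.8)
The three parts build on one another, so I would prove them in order. For part (1), the plan is to exploit the skew-symmetry of the associator in an alternative ring together with the standard Teichm\"uller and linearized alternative-law identities. The key is to show that $v^2$ centralizes associators enough that $v^4$ kills all associators from the nucleus side. Concretely, I expect to compute $(v^2, x, y)$ and relate it to $v(v,x,y) + (v,x,y)v$ type expressions using the identity $(ab,c,d) = a(b,c,d) + (a,c,d)b + \text{(commutator correction terms)}$, which in an alternative ring simplifies because of skew-symmetry. Iterating, $(v^4,x,y)$ collapses to $0$; the precise bookkeeping is routine but must be done carefully since $R$ is only alternative, not associative, so one cannot freely reassociate.

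For part (2), $[v^2,R,R]v = 0$, the plan is to use part (1) as leverage: since $v^4 \in N$, the element $v^2$ is ``almost nuclear,'' and one shows that multiplying any associator $(v^2,x,y)$ on the right by $v$ annihilates it. I would write $v^4 = (v^2)(v^2) \in N$ and expand $(v^4, x, y) = 0$ via the product rule for associators, isolating a term of the form $(v^2,x,y)v^2$ and another of the form $v^2(v^2,x,y)$; combined with the skew-symmetry and the fact that $v$ is a commutator (so $v = (a,b)$ for some $a,b$, giving extra alternative-ring identities relating $v$ and $v^2$), this should force $(v^2,x,y)v = 0$. This is the step I expect to be the main obstacle, because it requires finding exactly the right linearization to separate the right-multiplication by $v$ from the rest, and the non-associativity means intermediate terms do not vanish for free.

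For part 3(i), assume $v$ is not a zero divisor. By part (2), $[v^2,x,y]\,v = 0$ for all $x,y$; since $v$ is not a zero divisor, $[v^2,x,y] = 0$ for all $x,y$, i.e.\ $v^2 \in N$. For part 3(ii), suppose every commutator $(x,y)$ is a non-zero-divisor (vacuously the zero commutator causes no issue). By 3(i), $v^2 \in N$ for every commutator $v$. Now I must upgrade ``$N$ contains all squares of commutators'' to ``$N = C$.'' The plan is: in an alternative ring, $N$ is a subring, and one knows (from the references \cite{K1,K2}) that if $R$ is not associative then $N \subseteq C$ fails in general—but here the hypothesis is strong. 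I would argue that for $n \in N$ and any $x \in R$, the commutator $(n,x)$ lies in $N$ (nucleus is normal-ish under such operations in alternative rings) and also, being a commutator, its square lies in $N$; pushing this through, together with the non-zero-divisor hypothesis applied to $(n,x)$, forces $(n,x) = 0$, hence $n \in C$. Thus $N = C$, and since $(x,y)^2 \in N = C$ for all $x,y$, the final assertion follows. The delicate point in 3(ii) is justifying that $(n,x) \in N$ and that it must vanish; I would lean on the skew-symmetry of the associator and the identity $(nx - xn, R, R) = 0$, which holds because $n \in N$ controls the associators.
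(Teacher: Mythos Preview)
Your plan for 3(i) matches the paper exactly, and your instinct that (1) and (2) are the technical core is right---in fact the paper does not reprove them at all but simply cites \cite[Theorem~3.1]{K1}, so your sketch there is attempting more than the paper itself does.

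The genuine gap is in 3(ii). Your claim that $(n,x)\in N$ for $n\in N$ is actually correct in an alternative ring (it follows from skew-symmetry of the associator together with the rule $(nr,y,z)=n(r,y,z)$ for $n\in N$), but then you write that the non-zero-divisor hypothesis ``forces $(n,x)=0$'' without saying how. Knowing only that $(n,x)$ is a nonzero element of $N$ which is not a zero divisor yields no contradiction by itself; you must exhibit something nonzero that $(n,x)$, or a power of it, annihilates. The paper supplies precisely this missing ingredient: it invokes the identity
\[
(w,n)(w,n)(x,y,z)=0\qquad\text{for all }w,x,y,z\in R,\ n\in N,
\]
taken from \cite[Lemma~2.4(5)]{KS1}. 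Since $R$ is not associative there is some nonzero associator $(x,y,z)$, so $(w,n)^2$ is a zero divisor whenever it is nonzero; but $(w,n)$ is a commutator, hence by hypothesis either zero or not a zero divisor, and in an alternative ring the square of a non-zero-divisor is again a non-zero-divisor (use Artin's theorem on the subring generated by $(w,n)$ and the other factor). This forces $(w,n)=0$ for every $w$, i.e.\ $n\in C$. Your outline never produces such an annihilated element, and without an identity of this type the step from $(n,x)\in N$ to $(n,x)=0$ does not close.
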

\begin{proof}
For (1)\&(2) see \cite[Theorem 3.1]{K1}.
Part (3(i)) is an immediate consequence of (2),
and part (3ii) follows from the fact that
$(w,n)(w,n)(x,y,z)=0,$ for all $w.x,y,z\in R,$
and all $n\in N,$ see \cite[Lemma 2.4(5)]{KS1}.
\end{proof}

\section{The proof of Theorem A}

In this section $R$ is as in Theorem A.  So $R$ has $\one,$ it is not commutative, 
it satisfies hypothesis (i) of Theorem A and is one of the two possibilities of (ii) of Theorem A. 
$C$ denotes  the center of $R.$
Notice that by hypothesis (ii(b)) of Theorem A,
and by Lemma \ref{facts}(3(ii)),
\[
(x,y)^2\in C, \text{ for all }x,y\in R.
\]

\begin{lemma}\label{lem quadratic}
Let $x\in R\sminus C,$ and let $v=(x,y)$ be a non-zero commutator.
Then
\begin{enumerate}
\item
 $v+vx$ and $vx$ are non-zero commutators.

\item
$ax^2+bx+c=0,$ for some $a,b,c\in C,$ with $a, c$ non-zero.
\end{enumerate}
\end{lemma}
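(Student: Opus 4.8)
The plan is to handle part (1) first, since part (2) will be derived from it. For part (1), I would argue as follows. Since $x \notin C$ and $v = (x,y) \neq 0$, I want to show $vx$ and $v + vx$ are non-zero commutators. The natural candidate is to compute $(v, x) = vx - xv$ and $(vx, x)$ or similar expressions, exploiting that $v$ itself is a commutator and that by hypothesis (i) no non-zero commutator is a zero divisor, so in particular $v$ is not a zero divisor. A clean way: since $v^2 \in C$ (established in the preamble to this section), $v$ commutes with $v^2$, and one can try to express $vx$ as a commutator of the form $(\text{something}, x)$ or $(v, \text{something})$. I expect the identity $v \cdot v = v^2 \in C$ together with linearity of the commutator in each variable to give $vx + xv$ or $vx - xv$ as a commutator, and then combine with $v = (x,y)$ being a commutator. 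The key subtlety is that in the alternative (non-associative) case one must be careful that products like $(vx)$ associate correctly; but $v, x$ lie in the associative subring they generate (Artin's theorem), so expressions in $v$ and $x$ alone are unambiguous. To see $vx \neq 0$: if $vx = 0$ then $v$ is a zero divisor (as $x \neq 0$ since $x \notin C$), contradicting (i). Similarly $v + vx = v(1 + x) = 0$ would force $1 + x$ to be a zero divisor unless $1 + x = 0$, i.e. $x = -1 \in C$, again a contradiction; and once we know it is a commutator and non-zero, (i) applies.

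For part (2), the strategy is to produce enough $C$-linear relations among $1, x, x^2$. From part (1), $v$, $vx$, and $v+vx$ are all non-zero commutators, hence by the displayed fact in the section preamble their squares lie in $C$: $v^2 \in C$, $(vx)^2 \in C$, and $(v+vx)^2 \in C$. Expanding $(v+vx)^2 = v^2 + v(vx) + (vx)v + (vx)^2$ inside the associative subring $\langle v, x\rangle$, and using $v^2 \in C$, I get that $v(vx) + (vx)v = v^2 x + v x v \in C$. Likewise I would compute $(vx)^2 = v x v x \in C$. The goal is to massage these into a relation $a x^2 + b x + c = 0$ with $a, b, c \in C$. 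I expect that writing $w := v^2 \in C$ and manipulating $w x v x \in C \cdot$ something, or multiplying the relations by suitable central elements and by $v$ (which is invertible-like in the sense of not being a zero divisor), one isolates $x^2$. The mechanism: $(vx)^2 = vxvx \in C$, and $v^2 x v x = w x v x$; also $v x v = (v+vx)^2 - v^2 - (vx)^2 \in C$, call it $d$. Then $vxvx = (vxv)x = dx$... wait, need $d \in C$ to pull out, which it is. So $vxvx = dx$ is central, i.e. $dx \in C$; but also $vxvx$ directly — hmm, this would give $x \in C$ if $d \neq 0$. The resolution must be that the correct expansion produces a genuinely quadratic relation; I would track the computation carefully: from $vxv \in C$ and $v^2 \in C$ one gets, multiplying $vxv$ on the left by $v$ and right by nothing, $v^2 x v = v(vxv)$, and since $vxv \in C$ this equals $(vxv) v$, giving $v^2 xv = v x v^2$, i.e. $w x v = v x w = w v x$ (as $w$ central), so $w(xv - vx) = 0$, whence $xv = vx$ (as $w = v^2 \neq 0$ is not a zero divisor), contradicting $v = (x,y)$ being... no wait, $v$ and $x$ need not commute.

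Let me reconsider: the cleanest route is to observe $v = (x,y) = xy - yx$, so $vx$ and $xv$ are expressible via $x, y$, and more usefully $v \in \langle x, y\rangle$. I would instead use: $v^2 \in C$, $(vx)^2 \in C$, and $(xv)^2 \in C$ (the last since $xv = -(v,x)\cdot(-1)$... actually $xv - vx = -(v,x)$ so $xv = vx - (v,x)$; both $vx$ and $(v,x)$ are commutators but their sum/difference manipulations need care). The honest plan: set $p = (v, x) = vx - xv$, a commutator; by (1)-type reasoning $p$ and related elements are non-zero commutators, so $p^2, v^2, (vx)^2 \in C$. Expand $p^2 = (vx)^2 - (vx)(xv) - (xv)(vx) + (xv)^2 = (vx)^2 + (xv)^2 - v x^2 v - x v^2 x$. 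Now $x v^2 x = v^2 x^2$ since $v^2 \in C$. So $p^2 - (vx)^2 - (xv)^2 + v^2 x^2 = - v x^2 v$, and the left side is in $C$, so $v x^2 v \in C$, say $v x^2 v = c_0 \in C$. Also $v x v \in C$ (from part (1) squares as above), say $= c_1$. Then $v x^2 v = c_0$ and $(vxv)x$... I want to relate $vx^2v$ to $vxv$ and $x$. Since $v x v = c_1 \in C$: multiply to get $v x (v x v) = v x c_1 = c_1 vx$, and $vx(vxv) = (vxvx)v = (vx)^2 v$; so $(vx)^2 v = c_1 vx$, i.e. $v((vx)^2 \cdot 1 - c_1 x)$... with $(vx)^2 \in C$ this reads $v \cdot (\text{central element} - c_1 x) \cdot$ — not quite linear in $x^2$. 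The quadratic relation should emerge from $v x^2 v \in C$ combined with $vxv \in C$ and $v^2 \in C$: write $v x^2 v = v x \cdot x v$; hmm. I will not resolve the exact constants here, but the structure is: three central quantities $v^2, vxv, vx^2v$ (equivalently the squares of the three commutators $v, vx, vx-xv$), and eliminating $v$ from the left and right using its non-zero-divisor property yields $x^2 v^2 \cdot(\text{stuff}) + x \cdot(\text{stuff}) + (\text{stuff}) = 0$ with coefficients in $C$, which after multiplying through is the desired $ax^2 + bx + c = 0$. Non-vanishing of $a$: if $a = 0$ then $bx + c = 0$ with $b \neq 0$ gives $x = -b^{-1}c \in C$ (using $b$ central non-zero-divisor), contradiction; non-vanishing of $c$: if $c = 0$ then $ax^2 + bx = 0$, i.e. $x(ax + b) = 0$, and $x \neq 0$ forces $ax + b = 0$, again $x \in C$, contradiction.

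**The main obstacle.** The hard part is bookkeeping in the non-associative setting: I must repeatedly justify that the expressions I manipulate lie in an associative subring (here $\langle v, x \rangle$, or $\langle x, y\rangle$), and I must keep the list of "commutators" honest — every element whose square I claim to be central in $C$ must genuinely be shown to be a commutator (not merely a difference of products), which is exactly what part (1) is engineered to provide, and which may need one or two more elementary identities (linearity of $(\cdot,\cdot)$, and perhaps $(ab, c)$-type expansions valid in the two-generated associative subring). Once the correct trio of central elements is identified, extracting the quadratic and checking $a, c \neq 0$ is routine via the non-zero-divisor hypothesis.
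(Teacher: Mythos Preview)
Your plan for part (1) does not land on the needed identities. You aim to realise $vx$ and $v+vx$ as commutators via expressions like $(v,x)$ or by combining $vx\pm xv$, but none of these produce $vx$ itself as a commutator (and you cannot halve, since the characteristic~$\ne 2$ hypothesis is only imposed later in the paper). The paper's observation is immediate once seen: working in the associative subring $\langle x,y\rangle$ one has $vx=(xy-yx)x=xyx-yx^2=x(yx)-(yx)x=(x,yx)$, and likewise $v+vx=v(\one+x)=(x,\,y(\one+x))$. These are then non-zero by hypothesis~(i), since $v$ is not a zero divisor.

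For part (2) your computation derails at the claim ``$vxv\in C$''. What the expansion of $(v+vx)^2\in C$ actually gives is $v^2x+vxv\in C$, not $vxv\in C$ separately; this is why your subsequent manipulations produce spurious contradictions and you end up waving your hands (``I will not resolve the exact constants here''). The missed trick is strikingly short: set $\alpha:=(v+vx)^2=v^2+v^2x+vxv+(vx)^2\in C$ and multiply by $x$ on the right. In the associative subring $\langle v,x\rangle$ one has $(vxv)x=(vx)^2$, so
\[
\alpha x=v^2x^2+\bigl(v^2+(vx)^2\bigr)x+(vx)^2,
\]
which rearranges to $v^2\,x^2+\bigl(v^2+(vx)^2-\alpha\bigr)x+(vx)^2=0$, with $a=v^2$ and $c=(vx)^2$ non-zero directly because $v$ and $vx$ are non-zero commutators and hence not zero divisors. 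Your programme of producing three separate central quantities $v^2,\,vxv,\,vx^2v$ and then eliminating is neither needed nor, as written, correct.
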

\begin{proof}
(1)\quad
We have $v+vx=v(\one+x)=(x,y(\one+x)),$  and $vx=(x,yx).$
By hypothesis (i) of Theorem A, these commutators are non-zero.

(2)\quad
Let $\ga:=(v+vx)^2=v^2+v^2x+vxv+(vx)^2.$ Then $\ga\in C.$
We have $\ga x=v^2x^2+(v^2+(vx)^2)x+(vx)^2.$  Letting 
\[
\text{$a:=v^2, b:=v^2+(vx)^2-(v+vx)^2$ and $c:=(vx)^2,$}
\]
we see that $a, b, c\in C,$ and $ax^2+bx+c=0,$ with ${a\ne 0\ne c.}$
\end{proof}

\begin{lemma}\label{C}
$R$ contains no divisors of zero.
\end{lemma}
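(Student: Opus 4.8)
The plan is to argue by contradiction in two moves: first isolate the fact that $C$ has no nonzero zero divisor, then reduce the general situation to it.

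\emph{Stage 1 (no nonzero central zero divisor).} Suppose $c\in C$, $c\neq 0$, and $cw=0$ with $w\neq 0$ (left and right coincide, as $c$ is central). Since $R$ is not commutative, pick a nonzero commutator $v=(p,q)$; because $c$ is nuclear and central, a direct check gives $cv=(cp,q)$, again a commutator. Pulling the nuclear, central element $c$ across yields $(cv)w=v(cw)=0$, so hypothesis (i) forbids $cv\neq 0$; hence $cv=0$, and since the nonzero commutator $v$ is not a zero divisor, $c=0$ — a contradiction. In particular, if $zw=0$ with $z,w$ nonzero, then $z$ and $w$ are both zero divisors and so lie outside $C$.

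\emph{Stage 2 (the generic case).} If some commutator $(z,y)$ is nonzero, then Lemma~\ref{lem quadratic} applies with $x=z$, giving $a,b,c\in C$ with $a,c\neq 0$ and $az^2+bz+c=0$. Multiplying on the right by $w$ and using that $a,b,c$ are nuclear together with $z^2w=z(zw)=0$ (Artin's theorem, in the associative subring generated by $z$ and $w$), all terms but $cw$ vanish, so $cw=0$ with $c\in C\setminus\{0\}$ — contradicting Stage 1. Symmetrically, if some $(w,y)\neq 0$, apply Lemma~\ref{lem quadratic} to $w$ and left-multiply $aw^2+bw+c=0$ by $z$, using $zw^2=(zw)w=0$.

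\emph{Stage 3 (the residual case — the main difficulty).} What remains is to rule out that $z$ and $w$ each commute with \emph{all} of $R$; this is precisely where Lemma~\ref{lem quadratic} cannot be fed the element $z$, and it is the real obstacle. (In the associative case it cannot arise, since then $z$ would be central, against Stage 1.) I would take any nonzero commutator $v=(p,q)$ and observe, from the skew-symmetry of the associator and the fact that $z$ commutes with $p$ and with $qp$, that $zv=(zp,q)$; thus $zv$ is a commutator, and $zv\neq 0$ since the nonzero commutator $v$ is not a zero divisor. Then $(zv)^2\in C$ (squares of commutators are central), while $(zv)^2=z^2v^2$ because $z$ and $v$ commute, so the subring they generate is associative and commutative; hence $(zv)^2w=z^2(v^2w)=(z^2w)v^2=0$, using that $v^2$ is central and $z^2w=z(zw)=0$. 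By Stage 1 this forces $(zv)^2=0$, so the nonzero commutator $zv$ is a zero divisor — contradicting hypothesis (i). This completes the contradiction, and the argument is characteristic-free throughout.
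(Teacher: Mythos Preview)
Your proof is correct, and Stages~1 and~2 are essentially the paper's argument verbatim: first show that nonzero central elements are not zero divisors via $cv=(cp,q)$, then for a non-central zero divisor invoke Lemma~\ref{lem quadratic}(2) and multiply the resulting quadratic relation by the other factor to land back in Stage~1.

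Your Stage~3 is extra care the paper does not take. The paper simply writes ``Suppose next that $x\in R\setminus C$ \dots\ Then we immediately get from Lemma~\ref{lem quadratic}(2) that $cy=0$,'' tacitly assuming that $x\notin C$ guarantees a nonzero commutator $(x,\cdot)$. In the associative case this is automatic, but in the alternative case an element could in principle commute with all of $R$ while failing to lie in $N=C$, and then Lemma~\ref{lem quadratic} does not apply. You close this loophole directly: for $z$ commuting with everything you exhibit $zv=(zp,q)$ as a nonzero commutator (your justification is correct --- one checks $zv-(zp,q)=-(z,p,q)-(q,p,z)=0$ by the skew-symmetry of the associator), observe $(zv)^2=z^2v^2\in C$, and show it annihilates $w$, forcing $(zv)^2=0$ against hypothesis~(i). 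So your route is the paper's route plus an explicit treatment of this boundary case; what it buys you is a genuinely self-contained, characteristic-free argument that does not silently assume the commutative center coincides with $C$. (Your symmetric clause about $(w,y)\ne 0$ in Stage~2 is harmless but redundant, since Stage~3 already covers every $z$ that commutes with all of $R$, regardless of $w$.)
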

\begin{proof}
Suppose first that $cr=0,$ for some non-zero $c\in C.$
Let $v:=(x,y)$ be a non-zero commutator.
Then $(vc)r=v(cr)=0,$ but $vc=(x,yc)\ne 0,$ hence $r=0,$
so $c$ is not a divisor of $0.$

Suppose next that $x\in R\sminus C,$  and that
$xy=0,$ for some non-zero $y\in R.$ Then we immediately
get from Lemma \ref{lem quadratic}(2) that $cy=0,$  a contradiction.
\end{proof}

\begin{remark}\label{C field}
In view of Lemma \ref{C}, we can
form the {\it localization of $R$ at $C,$}  $R//C.$ This is the
set  of  all  formal fractions $x/c,\  x\in R,\ c\in C,\ c\ne 0,$ 
with the  obvious definitions: (i)  $x/c=y/d$ if and  only if $dx = cy;$  
(ii)  $(x/c) +  (y/d)= (dx+cy)/(cd);$ (iii)  $(x/c)(y/d)=  (xy)/(cd).$ 
It  easy to check that $r\mapsto r/\one$ is an embedding
of $R$ into $R//C$ and that the center of
$R//C$ is the fraction field of $C.$
Since $(x/c,y/d)= (x,y)/cd$ in $R//C$, we may replace $R$ with 
$R//C$ in Theorem A.
  Thus {\bf from now on
we replace $R$ with $R//C$ and assume that $C$ is a field.  We
also assume that $\charc(C)\ne 2.$} 
\end{remark}

The following technical lemma will be used
to construct an octonion division algebra inside
$R,$ when $R$ is alternative.

\begin{lemma}\label{abc}$ $
Suppose $R$ is alternative.
Let $a,b\in R\sminus\{0\}$ be a pair of  anticommutative elements of $R.$
 
\begin{enumerate}
\item
If $c\in R\sminus\{0\}$ anticommutes with $a$ and $b,$ then
for every permutation $\gs$ of $a,b,c$
\begin{itemize}
\item[(i)]
$\gs(a)\big(\gs(b)\gs(c)\big)={\rm sgn}\, \gs a(bc).$

\item[(ii)]
$\big(\gs(a)\gs(b)\big)\gs(c)={\rm sgn}\, \gs (ab)c.$
\end{itemize}

\item
If $a^2\in C,$ then $a$ anticommutes with $ab.$

\item
Suppose that $c\in R$ anticommutes with $a,b$ and $ab.$ Let $\{x,y,z\}=\{a,b,c\},$ then
\begin{itemize}
\item[(i)]
$x$ anticommutes $yz.$

\item[(ii)]
$(xy)z=-x(yz),$ hence $(xy)z, x(yz)\in \{a(bc), -a(bc)\}.$

\item[(iii)]
If, in addition, $a^2,b^2,c^2\in C,$ 
then $\{a,b,c,ab,ac,bc,a(bc)\}$ is a set of pairwise
anticommutative elements.
 
\end{itemize}
\end{enumerate}
\end{lemma}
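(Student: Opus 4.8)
The plan is to prove (1) first (it rests only on the skew-symmetry of the associator and Artin's theorem), deduce (2) directly from Artin's theorem, and then build (3) out of (1), (2) and the middle Moufang identity. For (1)(i) the point is that $u(vw)$ changes sign under a transposition of the entries of any pairwise anticommuting triple $(u,v,w)$: transposing $v$ and $w$ is immediate since $u(wv)=u(-vw)=-u(vw)$, while transposing $u$ and $v$ follows from the linearized left alternative law $(u,v,w)+(v,u,w)=0$, which after expanding and using $vu=-uv$ collapses to $v(uw)=-u(vw)$. Since these transpositions generate $S_3$ and the triple stays pairwise anticommuting at every stage, iteration gives $\sigma(a)\big(\sigma(b)\sigma(c)\big)=\operatorname{sgn}(\sigma)\,a(bc)$ for all $\sigma\in S_3$. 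For (1)(ii) write $(ab)c=a(bc)+(a,b,c)$ and apply $\sigma$: the product term picks up $\operatorname{sgn}(\sigma)$ by (1)(i) and the associator picks up $\operatorname{sgn}(\sigma)$ by its skew-symmetry, so $(\sigma(a)\sigma(b))\sigma(c)=\operatorname{sgn}(\sigma)\,(ab)c$.

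Part (2) is immediate from Artin's theorem: in the associative subring $\langle a,b\rangle$ one has $a(ab)=a^2b$ and $(ab)a=a(ba)=a(-ab)=-a^2b$, so $a(ab)+(ab)a=0$. For (3), the extra hypothesis that $c$ anticommutes with $ab$ feeds into Part (1) to give the identity $(ab)c=-a(bc)$: by (1)(i) the even permutation carrying $(a,b,c)$ to $(c,a,b)$ yields $c(ab)=a(bc)$, hence $(ab)c=-c(ab)=-a(bc)$. Granting this, parts (3)(i) and (3)(ii) become bookkeeping with Part (1). Fix $\{x,y,z\}=\{a,b,c\}$ and let $\pi\in S_3$ carry $(a,b,c)$ to $(x,y,z)$. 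Then $x(yz)=\operatorname{sgn}(\pi)\,a(bc)$, and $(xy)z=\operatorname{sgn}(\pi)(ab)c=-x(yz)$; this is (ii), and it also shows $x(yz),(xy)z\in\{a(bc),-a(bc)\}$. Moreover $(yz)x=\operatorname{sgn}(\rho)(ab)c$, where the permutation $\rho$ carrying $(a,b,c)$ to $(y,z,x)$ differs from $\pi$ by a $3$-cycle, so $\operatorname{sgn}(\rho)=\operatorname{sgn}(\pi)$ and $(yz)x=-x(yz)$; this is (i).

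For (3)(iii) one checks the $\binom{7}{2}=21$ pairs. The three pairs inside $\{a,b,c\}$ are hypotheses. A pair of the shape $\{a,ab\}$ (there are six, under the evident symmetry among $a,b,c$ and the harmless replacement $ba=-ab$) is handled by Part (2); the pairs $\{a,bc\}$ and $\{b,ac\}$ are (3)(i), and $\{c,ab\}$ is a hypothesis. For a pair of the shape $\{ab,ac\}$ (three of these), rewrite $ac=-ca$, use the middle Moufang identity $(ab)(ca)=a(bc)a$, then flexibility, (3)(i) and the left alternative law $a(aw)=a^2w$ to obtain $(ab)(ac)=a^2(bc)$, and the analogous computation gives $(ac)(ab)=-a^2(bc)$. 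Finally, for a pair involving $d:=a(bc)$, use Part (1) to rewrite $d$ in whichever of the forms $a(bc)$, $c(ab)$, $-b(ac)$ exhibits it as $w\cdot p$ with $p\in\{ab,ac,bc\}$ and $w$ anticommuting with $p$ (by (3)(i) or by hypothesis); then the anticommutativity of $d$ with $w$, respectively with $p$, comes out of the same Artin computation as in Part (2), applied on the left, respectively on the right, inside $\langle w,p\rangle$.

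The part I expect to require genuine care is (3)(iii): one must handle the middle Moufang identity and keep a great many sign conventions consistent across $21$ cases. I would tame it with the single observation that every element of the set other than $a,b,c$ is, up to sign, a product $pq$ of two anticommuting members of $\{a,b,c,ab,ac,bc\}$; this makes each relevant subring two-generated and collapses every verification onto the pattern of Part (2), with one appeal to the middle Moufang identity needed for the $\{ab,ac\}$-type pairs.
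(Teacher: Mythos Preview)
Your argument is correct and tracks the paper's proof closely: parts (1), (2), (3)(i), (3)(ii) are handled by the same computations (your proof of (2) is in fact slightly slicker, using only flexibility and $ba=-ab$ rather than $a^2\in C$). The one tactical divergence is in (3)(iii) for the pairs of type $\{xy,xz\}$: you bring in the middle Moufang identity, whereas the paper stays inside the swap identities already extracted in (1), writing $(xy)(xz)=-(x(xz))y=-x^2(zy)$ via $(uv)w=-(uw)v$ with $u=x$, $v=y$, $w=xz$ (legitimate because (3)(i) gives that $y$ and $xz$ anticommute); this keeps the proof self-contained but is otherwise equivalent to your route.
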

\begin{proof}
(1)  This appears in \cite[Lemma 7, p.~134]{K1}.
Since associators in $R$ are skew symmetric, $(a,b,c)+(b,a,c)=0.$ Hence
\[
0=(a,b,c)+(b,a,c)=(ab)c-a(bc)+(ba)c-b(ac)=-a(bc)-b(ac),
\]
so $b(ac)=-a(bc)$ this shows (1), and the proof of (2) is similar.
\medskip

\noindent
(2)  $a(ab)=a^2b=ba^2=(ba)a=-(ab)a.$
\medskip

\noindent
3(i)  
We show that $a$ anticommutes with $bc,$ the proof that $b$
anticommutes with $ac$ is similar.  Using (1), we have
\[
a(bc)=-c(ba)=(ba)c=-(bc)a.
\]
\medskip

\noindent
3(ii) 
By 3(i) and (1), $(xy)z=-z(xy)=z(yx)=-x(yz).$
The last part of 3(ii) follows from (1).
\medskip

\noindent
3(iii)  
First we show that 
\[\tag{$\ga$}
\text{$xy$ anti commutes with $xz.$}
\]
Indeed, by (1) and (2), $(xy)(xz)=-(x(xz))y=-x^2(zy),$ since $x^2\in C.$
Similarly $(xz)(xy)=-x^2(yz).$

Next note that
\[\tag{$\gb$}
 \text{$x$ anticommutes with $x(yz).$}
\]
This follows from (2) and 3(i), since $(yz)^2\in C.$

Finally we show that 
\[\tag{$\gc$}
\text{$xy$ anticommutes with $x(yz).$}
\]
Indeed,  by 3(ii) $(xy)(x(yz))=-(xy)((xy)z)=-(xy)^2z.$
And by 3(ii) and 3(i), $(x(yz))(xy)=-((xy)z)(xy)=(z(xy))(xy)=(xy)^2z,$ because ${(xy)^2\in C.}$

Notice now that 3(iii) follows from (2), 3(i), $(\ga)$, $(\gb)$ and $(\gc)$.
\end{proof}

The next proposition is the main tool in this paper.
It is used to construct a quaternion division algebra $Q$ 
inside $R,$ when $R$ is associative, and to prove that $R=Q.$
It is also used to construct an octonion division algebra $O$
inside $R,$ when $R$ is alternative, and to prove that $R=O.$

\begin{prop}\label{anti}
Let $u_1,u_2,\dots,u_n\in R\sminus C,$ such that $u_i^2\in C,$
and $u_{\ell}u_s=-u_su_{\ell},$ for all distinct $\ell,s.$  
Let 
\[
V:=C+Cu_1+\dots+Cu_n,
\]
be the subspace of $R$ spanned by $\one,u_1,\dots,u_n.$
\begin{enumerate}
\item
If $p\in R$ satisfies
\[\tag{$*$}
pu_{\ell}+u_{\ell}p:=d_{\ell}\in C,\quad\text{for all }\ell\in\{1,\dots, n\},
\]
then the element
\begin{equation}\label{m}
\textstyle{m:=p-\sum_{i=1}^n(d_i/2u_i^2)u_i,}
\end{equation}
satisfies $mu_{\ell}+u_{\ell}m=0,$ for all $\ell\in\{1,\dots,n\}.$

\item
If $R\ne V,$
then there exists $p\in R\sminus V$ such that the element $m$ of equation \eqref{m}
satisfies $mu_{\ell}+u_{\ell}m=0,$ for all $\ell\in\{1,\dots,n\},$ and $m^2\in C.$
\end{enumerate}
\end{prop}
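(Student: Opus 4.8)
The plan is to prove (1) by a direct computation, then use (1) plus Lemma \ref{abc} together with a dimension argument to get (2). For part (1), I would start from the defining relation $pu_\ell + u_\ell p = d_\ell$ and simply substitute $m = p - \sum_{i=1}^n (d_i/2u_i^2)u_i$ into $mu_\ell + u_\ell m$. The key observations are that $u_\ell u_s = -u_s u_\ell$ for $s\neq\ell$, so the cross-terms $u_i u_\ell + u_\ell u_i$ vanish for $i\neq\ell$, while the $i=\ell$ term contributes $(d_\ell/2u_\ell^2)(u_\ell u_\ell + u_\ell u_\ell) = (d_\ell/2u_\ell^2)(2u_\ell^2) = d_\ell$, since $u_\ell^2\in C$ is a central (hence invertible, as $C$ is a field) scalar. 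Thus $mu_\ell + u_\ell m = d_\ell - d_\ell = 0$. The only point requiring a word of care is that multiplication by the central scalar $d_i/2u_i^2$ commutes with everything and associates freely, which is immediate since $C$ is the center; in the alternative (non-associative) case one also notes that any expression involving only $u_\ell$ and central scalars lies in an associative subring.

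For part (2), assume $R\neq V$. I would first produce \emph{some} element of $R\sminus V$ that anticommutes with all the $u_\ell$. Pick any $q\in R\sminus V$. The hypothesis (i) of Theorem A forces quadratic relations: for each $\ell$, applying Lemma \ref{lem quadratic}(2) (or rather its underlying mechanism) to $u_\ell$ and to products like $u_\ell q$, one shows $q u_\ell + u_\ell q\in C$ — this is where I expect the real work to be. The cleanest route: for $x\in R\sminus C$, Lemma \ref{lem quadratic}(2) gives a monic-up-to-scalar quadratic $ax^2+bx+c=0$ with $a,c\in C^\times$; combined with $u_\ell^2\in C$, linearizing the quadratic form $x\mapsto x^2 \bmod C$ shows the associated bilinear form $B(x,y) := xy+yx \bmod C$ is well-defined, and in particular $q u_\ell + u_\ell q\in C$ for all $\ell$. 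Granting this, $q$ satisfies $(*)$ with some constants $d_\ell\in C$, so by part (1) the element $p:=q$ yields $m$ with $mu_\ell+u_\ell m = 0$ for all $\ell$. Since $m\equiv q\pmod V$ (we only subtracted elements of $Cu_1+\dots+Cu_n\subseteq V$) and $q\notin V$, we get $m\in R\sminus V$, in particular $m\notin C$.

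Finally I must check $m^2\in C$. Write $w:=m$, so $w\in R\sminus C$ and $w$ anticommutes with each $u_\ell$. By Lemma \ref{lem quadratic}(2) applied to $w$, there are $a,b,c\in C$ with $a\neq 0\neq c$ and $aw^2+bw+c=0$, i.e.\ $w^2 = -(b/a)w - c/a$. It remains to force $b=0$. Here I would use the anticommuting relation: since $w$ anticommutes with $u_1$ (which exists as $n\geq 1$; if $n=0$ then $V=C$ and any $w\in R\sminus C$ with $w^2\in C$ comes directly from Lemma \ref{lem quadratic}(2) after the standard trick of replacing $w$ by its ``trace-zero part'' $w - (\text{scalar})$), multiply the relation $aw^2+bw+c=0$ on both sides by $u_1$ and add, using $w^2\in C\!\!-\text{span}$ from the relation and $wu_1 = -u_1 w$: the $w^2$ and constant terms are symmetric under $u_1$-conjugation while the $bw$ term is antisymmetric, forcing $2bw\cdot(\text{something}) = 0$; since $\charc(C)\neq 2$, $w\neq 0$, and $R$ has no zero divisors (Lemma \ref{C}), we conclude $b=0$, hence $w^2 = -c/a\in C$. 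The main obstacle is the bookkeeping in this last step and in establishing $qu_\ell+u_\ell q\in C$; both reduce, via hypothesis (i) and Lemma \ref{lem quadratic}, to the fact that every element of $R\sminus C$ satisfies a quadratic over $C$ and that the resulting ``norm'' and ``trace'' behave like those of a composition algebra.
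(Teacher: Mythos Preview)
Your argument for part (1) is correct and is exactly the paper's computation.

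For part (2) there is a real gap at the step ``$qu_\ell + u_\ell q \in C$''. The map $x \mapsto x^2 \bmod C$ is not well-defined on $R/C$ (since $(x+c)^2 \equiv x^2 + 2cx \pmod C$), so ``linearizing'' it is not meaningful as stated. If instead you try to polarize honestly---writing the monic quadratics for $q+u_\ell$ and $q-u_\ell$ and adding---what drops out, using $q\notin V$ and $u_\ell\notin C$, is only
\[
qu_\ell + u_\ell q \;-\; t(q)\,u_\ell \in C,
\]
where $t(q)\in C$ is the unique scalar with $q^2 - t(q)q \in C$. So condition $(*)$ fails for a generic $q$, and you cannot even set up the orthogonalization in (1). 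The paper's fix is precisely the ``trace-zero'' move you mention only in the $n=0$ aside: replace an arbitrary $x\in R\sminus V$ by $p:=x-b/2$ (where $x^2-bx+c=0$), so that $p^2\in C$ from the start. With $p^2,u^2\in C$, adding the quadratics for $p\pm u$ forces both linear coefficients to vanish (since $p\notin V$, $u\notin C$), whence $(p+u)^2\in C$ and thus $pu+up\in C$.

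Once $p^2\in C$ and every $pu_i+u_ip\in C$, the paper gets $m^2\in C$ by a one-line expansion: every term of $m^2 = p^2 - \sum_i \alpha_i(pu_i+u_ip) + \sum_i \alpha_i^2 u_i^2$ is already central, the cross terms $u_iu_j+u_ju_i$ vanishing. Your conjugation argument (conjugate the quadratic for $m$ by $u_1$, subtract to kill the linear term) is a correct alternative for this last step, but it only becomes available after $m$ anticommutes with $u_1$, which needs the missing step above. Also, Lemma~\ref{abc} plays no role here; it is used later for the octonion multiplication table.
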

\begin{proof}
(1)
We show that $mu_1+u_1m=0,$ the proof for $u_2,\dots,u_n$
is identical.
\[
mu_1+u_1m=
pu_1+u_1p-d_1-\sum_{i=2}^n(d_i/2u_i^2)(u_iu_1+u_1u_i)=0
\]
\medskip

\noindent
(2)
We show that there exists $p\in R\sminus V,$ such that
$p^2\in C,$ and such that $p$ satisfies $(*).$

Let $x\in R\sminus V.$ By Lemma \ref{lem quadratic}, $x$ satisfies a quadratic,
and hence a monic quadratic
equation $x^2-bx+c=0,$ over $C.$ Let $p:=x-b/2.$ Then $p\notin V,$
and $p^2\in C.$  Let $u\in\{u_1,\dots u_n\}.$ Then both $p+u$ and $p-u$ satisfy
a monic quadratic equation over $C.$  That is
\[
\begin{aligned}
&(p+u)^2=c_1(p+u)+c_2\\
&(p-u)^2=c_3(p-u)+c_4.
\end{aligned}
\]
Adding we get
\[
(c_1+c_3)p+(c_1-c_3)u+c_5=0,\quad\text{where }c_5=c_2+c_4-2p^2-2u^2\in C.
\]
Now $c_1+c_3=0,$ since $p\notin V,$  and then $c_1-c_3=0,$ since $u\notin C.$
We thus get that
\[
pu+up=c_2-p^2-u^2\in C.
\]

Let now $m$ be as in equation \eqref{m}.  For $i\in\{1,\dots,n\},$ set ${\ga_i:=(d_i/2u_i^2)\in C.}$  Note that
\[
m^2=p^2+\sum_{i=1}^m\ga_i^2u_i^2+\sum_{i=1}^m \ga_i(pu_i+u_ip)\in C.
\]
\end{proof}

Now we construct that quaternions and the octonions inside $R$ in the respective cases.

\begin{prop}\label{Q,O}$ $
\begin{enumerate}
\item
$R$ contains a quaternion division algebra.
\[
Q=C\one+Ca+Cb+Cab,\qquad a,b\in R\sminus\{0\}, a^2, b^2\in C.
\] 

\item
If $R$ is alternative, then there exists $c\in R\sminus\{0\}$ that anticommutes 
with $a,b$ and $ab$ above, and such that $c^2\in C.$  Hence $R$ contains an octonion division algebra
\[
O:=\ff\one+Cu_1+Cu_2+Cu_3+Cu_4+Cu_5+Cu_6+Cu_7,
\]
where
\[
u_1:=a, u_2:=b, u_3=ab, u_4:=c, u_5=ac, u_6:=bc, u_7:=(bc)a,
\]
so $u_{\ell}\ne 0, u_{\ell}^2\in C,$ and $u_{\ell}u_s=-u_su_{\ell},$ for all $\ell, s.$
\end{enumerate}
\end{prop}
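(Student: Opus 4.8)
The plan is to prove the two parts in sequence, using Proposition~\ref{anti} to manufacture anticommuting elements and Lemma~\ref{abc} to control the resulting products.

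\textbf{Part (1).}
First I would start with a non-central element $x_1\in R\sminus C$, which exists since $R$ is not commutative. By Lemma~\ref{lem quadratic}(2) (equivalently by the argument in Proposition~\ref{anti}(2) applied with $n=0$, i.e. $V=C$), $x_1$ satisfies a monic quadratic over $C$, so after replacing $x_1$ by $x_1-b/2$ I get $a:=x_1$ with $a\in R\sminus C$ and $a^2\in C$. Since $R\ne Ca+C\one$ — because $R$ is not commutative, so it cannot be spanned by $\one$ and one element — I apply Proposition~\ref{anti}(2) with $n=1$, $u_1=a$, to obtain $m\in R\sminus(C\one+Ca)$ with $ma+am=0$ and $m^2\in C$; set $b:=m$. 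Then $a,b\in R\sminus\{0\}$, $a^2,b^2\in C$, and $ab=-ba$. Now $Q:=C\one+Ca+Cb+Cab$ is a $4$-dimensional subspace (one checks $\one,a,b,ab$ are $C$-independent: any dependence, after multiplying on the left by $a$ and using $a^2\in C$, $ab=-ba$, forces a dependence among $\one,a,b$, which is impossible since $b\notin C\one+Ca$); and using $a^2,b^2\in C$ and anticommutativity, the multiplication table closes inside $Q$, so $Q$ is a (necessarily associative, by Artin's theorem since it is generated by two elements) subalgebra. Being a finite-dimensional subalgebra of the zero-divisor-free ring $R$ (Lemma~\ref{C}), $Q$ is a division algebra, hence a quaternion division algebra.

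\textbf{Part (2).}
Assuming $R$ is alternative but not associative, $R\ne Q$: otherwise $R=Q$ would be associative. So I apply Proposition~\ref{anti}(2) with $n=3$, $u_1=a$, $u_2=b$, $u_3=ab$ — these are pairwise anticommutative by Lemma~\ref{abc}(2) (which gives $a(ab)=-(ab)a$ and $b(ab)=-(ab)b$), and each has square in $C$. This produces $c:=m\in R\sminus Q$ with $c^2\in C$ and $c$ anticommuting with each of $a,b,ab$. Now Lemma~\ref{abc}(3) applies verbatim: $\{a,b,c,ab,ac,bc,a(bc)\}$ is a set of pairwise anticommutative elements, each nonzero (nonzero since $R$ has no zero divisors and e.g.\ $a\ne0\ne c$ so $ac\ne0$), each with square in $C$ (for $ac$: $(ac)^2=(ac)(ac)=-(a(ca))c\cdot(-1)\dots$ — more cleanly, by Lemma~\ref{abc}(1) and $a^2,c^2\in C$ one gets $(ac)^2=-a^2c^2\in C$, and similarly for the others using $(bc)a=-a(bc)$ etc.). Note $u_7:=(bc)a=-a(bc)$ by Lemma~\ref{abc}(3(i)), so the sign choice in the definition is consistent. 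Hence the $u_1,\dots,u_7$ satisfy the hypotheses $u_\ell\ne0$, $u_\ell^2\in C$, $u_\ell u_s=-u_s u_\ell$ for all distinct $\ell,s$, and $O:=C\one+\sum_{i=1}^7 Cu_i$ has the structure of an octonion algebra over $C$; it is a subalgebra because all products $u_\ell u_s$ lie in $O$ (each such product is, up to sign, one of the $u_i$, by repeated use of Lemma~\ref{abc}(1) and the defining relations among $a,b,c$), and it is a division algebra because $R$ has no zero divisors.

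\textbf{Main obstacle.}
The genuinely delicate point is verifying that $O$ is closed under multiplication, i.e.\ that every product $u_\ell u_s$ with $\ell\ne s$ is (up to sign) again some $u_i$, and that the $u_i$ together with $\one$ are $C$-linearly independent (so $O$ is genuinely $8$-dimensional and hence the standard octonion algebra rather than some degenerate quotient). This is a bookkeeping exercise driven entirely by Lemma~\ref{abc}: parts (1), (2), (3(i)), (3(ii)) let one rewrite any triple product of distinct elements from $\{a,b,c\}$ in terms of $a(bc)$ with a computable sign, and non-associativity (skew symmetry of associators) is exactly what makes the $u_7$-row of the table work. I would organize this as a short table rather than case analysis. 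Linear independence of $\one,u_1,\dots,u_7$ follows because $R$ has no zero divisors: a dependence relation, left-multiplied by a suitable $u_i$ and reduced via the anticommutation relations and $u_i^2\in C$, collapses to a shorter relation, and one finishes by the independence of $\one,a,b,ab,c$ established along the way (independence of these five uses $c\notin Q$ and the independence of $\one,a,b,ab$ from Part~(1)).
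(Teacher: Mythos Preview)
Your proposal is correct and follows essentially the same route as the paper: use Proposition~\ref{anti}(2) to produce $b$ anticommuting with $a$, then (in the alternative case) $c$ anticommuting with $a,b,ab$, and invoke Lemma~\ref{abc}(3(iii)) for the pairwise anticommutation of $u_1,\dots,u_7$. The only cosmetic differences are that the paper obtains $a$ directly as a nonzero commutator (so $a^2\in C$ by the standing hypothesis) rather than by completing the square on an arbitrary noncentral element, and the paper dispatches the closure/multiplication-table verification for $O$ by citing the table in \cite{K2} rather than sketching it via Lemma~\ref{abc} as you do.
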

\begin{proof}
(1)  Let $a\in R$ be a nonzero commutator.  Thus $a^2\in C\sminus\{0\}.$
Of course $R\ne \ff \one +\ff a,$ because $R$ is not commutative.
By Proposition \ref{anti}(2), there exists $b\in R\sminus\{0\}$ such that $ab=-ba$ and $b^2\in C.$
Thus $Q:=C\one+Ca+Cb+Cab$ is a quaternion algebra.  Since $R$ contains no divisors
of zero, $Q$ is a division algebra.
\medskip

\noindent
(2) 
By (1) there exist nonzero $a,b\in R$ such that $ab=-ba$ and $a^2, b^2\in C.$
Since $R$ is not associative $R\ne Q,$ where $Q$ is as in (1).  Hence by Proposition \ref{anti}(2),
and since $a,b,ab$ pairwise anticommute, there exists $c\in R\sminus Q,$ such that $c$ anticommute
with $a,b,ab,$ and $c^2\in C.$ By Lemma \ref{abc}(3(iii)), $u_1,\dots,u_7$ satisfy
the assertion of part (2).

Set
\[
\ga:=a^2, \gb:=b^2, \gc=c^2.
\]
It is easy to check now that $\{u_1,\dots,u_7\}$ satisfy the multiplication table
on p.~137 of \cite{K2} (with $\one=u_0).$  Hence $O$
is an octonion algebra.  Since $R$ has no zero divisors, $O$ is a division algebra
(see \cite[section III]{Sc}).
\end{proof}

Lemma \ref{oct} below will be used to show that $R=O,$
when $R$ is alternative, where $O$ is as in Lemma \ref{Q,O}(2) above.

\begin{lemma}\label{oct}
Let $m, a, b, c\in R\sminus\{0\},$ and let $\{x,y,z\}=\{a,b,c\}.$
Assume that
\begin{itemize}
\item[(i)]
$x$ anticommutes with $y$ and $yz.$

\item[(ii)]
$m$ anticommutes with $x, xy, a(bc).$
\end{itemize}
Then
\begin{enumerate}
\item
$mx$ anticommutes with $y, yz;$ $m(xy)$ anticommutes with $z,$
and $(mx)y$ anticommutes with $z.$

\item
$2m(x(yz))=0.$
\end{enumerate}
\end{lemma}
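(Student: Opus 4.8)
The proof is a sequence of manipulations with the anticommutation relations, using Lemma~\ref{abc} (especially part (3)) to move parentheses around and to conclude that the various triple products of $a,b,c$ all lie in $\{a(bc),-a(bc)\}$. Throughout, ``$u$ anticommutes with $w$'' is used as a rewriting rule $uw=-wu$, and in an alternative ring all the Moufang/skew-symmetry identities of Section~2 are available; moreover any subalgebra generated by two elements is associative, so products like $m(mx)$, $(mx)x$ etc.\ need no bracketing. The key structural input is that $x,y,z$ is just a relabelling of $a,b,c$, so by Lemma~\ref{abc}(3(i))--(3(ii)) each of $x$ anticommutes with $yz$, $(xy)z=-x(yz)$, and all of $(xy)z$, $x(yz)$ equal $\pm a(bc)$; hence hypothesis (ii), which is stated for the fixed triple product $a(bc)$, applies equally to $x(yz)$ and to $(xy)z$ after inserting a sign.

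**Part (1).** First I would show $mx$ anticommutes with $y$. Compute
\[
(mx)y = m(xy)\cdot(\pm 1)\ \text{?}
\]
more carefully: in the associative subring $\langle m,x\rangle$ we do not get to reassociate with $y$, so instead I would use Lemma~\ref{abc}(1) (the ``Moufang-type'' rearrangement for three pairwise anticommuting elements) applied to the anticommuting triple $m, x, y$ — note $m$ anticommutes with $x$ by (ii), $m$ anticommutes with $y$? This is where one must be careful: (ii) only gives $m$ anticommutes with $x$, $xy$, $a(bc)$, not directly with $y$. So the right move is: $mx$ anticommutes with $y$ $\iff$ $(mx)y = -y(mx)$, and one rewrites $(mx)y$ using skew-symmetry of associators, $(m,x,y)+(m,x,y)$-type cancellations, plus $xy=-yx$ and the known anticommutations, reducing everything to $m(xy)=-(xy)m$, which is hypothesis (ii). Concretely I expect the identity $(mx)y = m(xy) - m(\,\text{associator terms}\,)$ to collapse because each associator is, up to sign, $\pm a(bc)$ or $\pm x(yz)$, and $m$ anticommutes with all of these. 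The same computation, with the roles permuted, gives that $mx$ anticommutes with $yz$ (using that $x$ anticommutes with $yz$ from (i), and $m$ anticommutes with $x(yz)=\pm(xy)z$). Then ``$m(xy)$ anticommutes with $z$'' and ``$(mx)y$ anticommutes with $z$'' follow by exactly the same pattern, now using that $xy$ anticommutes with $z$ (this is Lemma~\ref{abc}(3(i)) for the triple $x,y,z$, since $(xy)z=-z(xy)$) and that $m$ anticommutes with $x$, with $xy$, with $(xy)z=\pm x(yz)$.

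**Part (2).** This is the crux and the step I expect to be hardest. The point is to evaluate $m(x(yz))$ two ways. On one hand, by part (1), $(mx)y$ anticommutes with $z$, so
\[
\big((mx)y\big)z = -z\big((mx)y\big),
\]
and by the rearrangement lemma applied to suitable anticommuting triples one rewrites both sides in terms of $m(x(yz))$; the mismatch in sign forces $2\,m(x(yz))=0$. In more detail: using Lemma~\ref{abc}(3(ii)) inside the octonion-type configuration, $x(yz)=-(xy)z$ up to the fixed sign $\pm a(bc)$, and $m$ anticommutes with $x(yz)$, with $x$, with $xy$; combining $m\big(x(yz)\big)=\mp\,m\big((xy)z\big)$ with associator skew-symmetry $(m,xy,z)=-(xy,m,z)$ etc., and with part (1)'s statements that $m(xy)$ and $(mx)y$ both anticommute with $z$, one gets an expression for $m(x(yz))$ that equals its own negative. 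The delicate bookkeeping is keeping track of the single sign $\varepsilon=\pm1$ with $x(yz)=\varepsilon\,a(bc)$ and of which pairs are genuinely anticommuting versus merely related by a sign; I would organize this by first proving the auxiliary identity $m\big(x(yz)\big) = -\,m\big(x(yz)\big)$ directly from $m\cdot\big((xy)z\big) = -\big((xy)z\big)\cdot m$ (hypothesis (ii), via $x(yz)\in\{a(bc),-a(bc)\}$) combined with $\big((mx)y\big)z=-z\big((mx)y\big)$ and the skew-symmetry of associators to convert $\big((mx)y\big)z$ into $m\big(x(yz)\big)$ up to sign. Since $\charc(C)\neq 2$ is \emph{not} yet assumed in this lemma's hypotheses, the conclusion is stated as $2\,m(x(yz))=0$ rather than $m(x(yz))=0$; that is exactly what the cancellation ``$t=-t$'' yields. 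The main obstacle, then, is purely organizational: choosing the order of rearrangements so that every intermediate triple is genuinely pairwise anticommuting (so Lemma~\ref{abc}(1) applies) rather than anticommuting-up-to-sign, and tracking the resulting signs.
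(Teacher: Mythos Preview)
Your overall strategy—rearrange products using Lemma~\ref{abc} until $m(x(yz))$ is shown equal to its own negative—is exactly the paper's. The gap is in how you read the hypotheses, and that misreading is what makes your sketch circuitous and ultimately incomplete.

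The clause ``let $\{x,y,z\}=\{a,b,c\}$'' is meant so that (i) and (ii) hold for \emph{every} ordering of $a,b,c$. Thus $m$ anticommutes with each of $a,b,c$ and with each of $ab,ac,bc$; and $a,b,c$ pairwise anticommute with one another and each with the product of the other two. Your worry that ``(ii) only gives $m$ anticommutes with $x$, $xy$, $a(bc)$, not directly with $y$'' is therefore misplaced: take the relabeling $(x',y',z')=(y,x,z)$ and (ii) gives $m$ anticommuting with $y$. Once this is clear, every triple you need—$\{m,x,y\}$, $\{m,x,yz\}$, $\{m,xy,z\}$, $\{mx,y,z\}$—is genuinely pairwise anticommuting, so Lemma~\ref{abc}(1) applies directly with no ``associator terms collapsing'' required.

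The paper's proof of (1) is then a handful of one-line computations of the shape
\[
(mx)(yz)\;=\;-((yz)x)m\;=\;m((yz)x)\;=\;-(yz)(mx),
\]
the outer equalities being Lemma~\ref{abc}(1) on the anticommuting triple $\{m,x,yz\}$ and the middle one being that $m$ anticommutes with $(yz)x\in\{\pm a(bc)\}$. The claims for $m(xy)$ and $(mx)y$ versus $z$ are identical in form. For (2), part (1) supplies exactly the anticommutation needed to apply Lemma~\ref{abc}(3(ii)) to each of $\{m,x,yz\}$, $\{mx,y,z\}$, $\{m,xy,z\}$, yielding
\[
m(x(yz))=-(mx)(yz)=((mx)y)z
\quad\text{and}\quad
m(x(yz))=-m((xy)z)=(m(xy))z=-((mx)y)z,
\]
whence $2m(x(yz))=0$. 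The ``delicate bookkeeping'' you anticipate disappears once the quantification is read correctly; as written, your proposal never carries out a single one of these lines.
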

\begin{proof}
(1)  Using Lemma \ref{abc}(1) we get,
\[
(mx)(yz)=-((yz)x)m=m((yz)x)=-(yz)(mx).
\]
and 
\[
(m(xy))z=-(z(xy))m=m(z(xy))=-z(m(xy)).
\]
Also
\[
((mx)y)z=-(zy)(mx)=(mx)(zy)=-z((mx)y).
\]
\medskip

\noindent
(2)  We can now use Lemma \ref{abc}(3(ii)) with $\{m, x,yz\},$ $\{mx, y,z\}$ and  $\{m, xy, z\},$
in place of $\{a,b,c\}.$ Thus we have
\[
m(x(yz))=-(mx)(yz)=((mx)y)z,
\]
and
\[
m(x(yz))=-m((xy)z)=(m(xy))z=-((mx)y)z.\qedhere
\]
\end{proof}

\noindent
{\bf Proof of Theorem A.}\\

\noindent
We can now complete the proof of Theorem A.  So Let $R$ be as in Theorem A.  By 
Lemma \ref{C} $R$ contains no zero divisors.  We now assume that the characteristic of $R$
is not $2,$ and we replace $R$ with $R//C$ as in Remark \ref{C field}.

Let $a, b\in R$ and $Q$ be as in Proposition \ref{Q,O}(1). 
Suppose that $R$ is associative and that $R\ne Q.$  By Proposition \ref{anti}(2),
there exists $m\in R\sminus Q$ that anticommutes with $a,b, ab$.  But then
\[
m(ab)=(ma)b=-(am)b=-a(mb)=(ab)m=-m(ab).
\]
So $2m(ab)=0,$ hence $m(ab)=0.$  But $R$ has no divisors of $0,$ a contradiction.

Suppose now that $R$ is alternative.  Let $O$ be as in Proposition \ref{Q,O}(2).
Assume that $R\ne O.$  By Proposition \ref{anti}(2), there exists $m\in R\sminus O$ that 
anticommutes with $u_1,\dots u_7.$  By Lemma \ref{oct},
$2m(a(bc))=0,$ again a contradiction.\qedhere


\begin{thebibliography}{99999}
\bibitem[BK]{BK} R.~H.~Bruck, E.~Kleinfeld, {\it The structure of alternative division rings,} 
Proc.~Amer.~Math.~Soc.~{\bf 2} (1951), 878--890.

\bibitem[K1]{K1} E.~Kleinfeld, {\it Simple alternative rings,} Ann.~of Math.~(2) {\bf 58} (1953), 544--547.

\bibitem[K2]{K2} E.~Kleinfeld, {\it A Characterization of the Cayley Numbers,}
Math.~Assoc.~America Studies in Mathematics, Vol.~2, pp.~126--143, Prentice-Hall, Englewood Cliffs, N.~J., 1963.

 \bibitem[KS1]{KS1} E.~Kleinfeld, Y.~Segev, {\it A short characterization of the octonions,}
 Comm.~Algebra {\bf 49} (2021), no.~12, 5347--5353.

 \bibitem[KS2]{KS2} E.~Kleinfeld, Y.~Segev, {\it A characterization of the quaternions using commutators,} 
to appear in  Math.~Proc.~R.~Ir.~Acad.

\bibitem[Sc]{Sc} R.~D.~Schafer, {\it An introduction to nonassociative algebras,}
Pure and Applied Mathematics, Vol.~22 Academic Press, New York-London.
\end{thebibliography}
\end{document}